\newcolumntype{L}[1]{>{\raggedright\arraybackslash}p{#1}}
\newcolumntype{C}[1]{>{\centering\arraybackslash}p{#1}}
\newcolumntype{R}[1]{>{\raggedleft\arraybackslash}p{#1}}
\newcommand{\norm}[1]{\left\lVert #1 \right\rVert}
\newcommand{\abs}[1]{\left\lvert #1 \right\rvert}
\newcommand{\dv}[2]{\frac{\mathrm{d}  #1}{ \mathrm{d} #2}}
\newcommand{\dvdouble}[2]{\frac{\mathrm{d}^2  #1}{ \mathrm{d} #2^2}}
\newcommand{\rr}{\mathbb{R}}
\newcommand{\ee}{\mathbb{E}}
\newcommand{\Nn}{\mathcal{N}}
\newcommand{\Uu}{\mathcal{U}}
\newcommand{\Xx}{\mathcal{A}}
\newcommand{\Yy}{\mathcal{Y}}
\newcommand{\nx}{n_{x}}
\newcommand{\ny}{n_{y}}
\newcommand{\nh}{n_{h}}
\newcommand{\na}{n_{\alpha}}
\def\pgfscale{0.78}
\def\BibTeX{{\rm B\kern-.05em{\sc i\kern-.025em b}\kern-.08em
		T\kern-.1667em\lower.7ex\hbox{E}\kern-.125emX}}
\newtheorem{prop}{Proposition}
\newtheorem{example}{Example}
\newtheorem{remark}{Remark}
\begin{document}
	
	\title{\LARGE \bf
		An
		Efficient Method for the
		Joint Estimation of System Parameters and Noise Covariances for Linear Time-Variant Systems}
	\author{{L\'eo Simpson$^1$, Andrea Ghezzi$^2$, Jonas Asprion$^1$, Moritz Diehl$^{2,3}$}
		\thanks{$^1$ Research and Development, Tool-Temp AG, Switzerland,
			\tt{\small leo.simpson@tool-temp.ch. }}
		\thanks{$^2$ Department of Microsystems Engineering (IMTEK), University of Freiburg, 79110 Freiburg, Germany \tt{\small\{andrea.ghezzi, moritz.diehl\}@imtek.uni-freiburg.de}}
		\thanks{$^3$ Department of Mathematics, University of Freiburg, 79104 Freiburg, Germany}
		\thanks{This research was supported by the EU via ELO-X 953348.}}
	
	\maketitle
	
	\begin{abstract}
		We present an optimization-based method for the joint estimation of system parameters and noise covariances of linear time-variant systems.
		Given measured data, this method maximizes the likelihood of the parameters.
		We solve the optimization problem of interest via a novel structure-exploiting solver.
		We present the advantages of the proposed approach over commonly used methods in the framework of Moving Horizon Estimation.
		Finally, we show the performance of the method through numerical simulations on a realistic example of a thermal system.
		In this example, the method can successfully estimate the model parameters in a short computational time.
	\end{abstract}
	
	\section{Introduction}
	System identification and estimation enable us to build accurate models which is a fundamental prerequisite for successfully solving control tasks.
	Having precise models also allow for reliable predictions about the system behavior which are essential for the deployment of Model Predictive Control (MPC)~\cite{Rawlings2017}.
	
	In the context of system identification, subspace methods are widely used for identifying linear systems~\cite{Verhaegen1994, Van1994, Ljung1999}.
	However, these methods cannot enforce any particular structure, which is often given by the laws of physics.
	Parametric system identification overcomes this limitation~\cite{Ljung1999}.
	For online state estimation of linear systems, several methods exists such as the Kalman filter (KF)~\cite{Kalman1960}.
	To apply one of these state estimation methods, it is often necessary to estimate the covariances of the noise model using the available data, and one could use, e.g., covariance matching~\cite{Myers1976}, or correlation techniques~\cite{Mehra1970}.
	
	The Maximum Likelihood Estimation (MLE) problem for parametric linear dynamical systems has been formulated in~\cite{Kashyap1970, Astrom1979}, or more recently in~\cite{Valluru2017}.
	Approximate versions of the MLE problem have also been studied.
	These fall into the class of prediction error methods, and they have the advantage of being more computationally tractable compared to the exact MLE problem.
	Nevertheless, when the number of parameters to estimate grows, the resulting optimization problem becomes difficult to solve, limiting the actual use of methods based on MLE.
	To get through this limitation, typically two separate tasks are considered, first, the system parameters are identified, and secondly, the estimation of the process and measurement noise is carried out~\cite{Valluru2017}.

	\paragraph*{Contributions}
	
	In this paper, we study the MLE problem for linear time-variant systems and provide the following contributions
	\begin{itemize}
		\item we introduce a framework in which the MLE formulation can be stated and used for the joint estimation of parameters in the deterministic part of the model and parameters in the covariance matrices of the process and measurement noise;
		\item we discuss and motivate with a counterexample why this method might provide generally a better parameter estimation than Trajectory Optimization (TO) methods, which are widely used in the context of Moving Horizon Estimation (MHE)~\cite{Bock2007b, Kuehl2011};
		\item we propose a tailored optimization algorithm to efficiently solve the optimization problem resulting from the MLE approach, and compare it with a state-of-the-art solver.
	\end{itemize}
	The combination of the MLE formulation with the proposed optimization algorithm constitutes a novel parameter estimation method for which performance, in terms of prediction accuracy, and efficiency, in terms of runtime, is ultimately proven on a realistic example of thermal control system.
	
	\paragraph*{Outline}
	In Section~\ref{section:problem} we introduce the considered class of systems, the estimation task, and we provide relevant examples that fall into this class.
	Section~\ref{section:MLE} introduces the MLE method for parameter identification.
	In Section~\ref{section:analysis} we compare the MLE method against TO, another common method for parameter estimation, providing a statistical result and a counterexample for TO.
	In Section~\ref{section:optimization}, we present an optimization algorithm to solve the MLE problem.
	Section~\ref{section:numerical} presents numerical results of the proposed method for a realistic thermal control system.
	
	\paragraph*{Notation}
	We denote by $S_n^{++}$, the set of symmetric Positive Definite (PD) matrices of $\rr^{n \times n}$.
	For $M \in S_n^{++}$ and $e \in \rr^n$, we write $\norm{e}^2_M \coloneqq e^{\top} M e$ for $e \in \rr^n$, and $\abs{M}$ the determinant of $M$.
	For the unweighted $L_2$ norm, we omit the index: $\norm{e}^2 \coloneqq e^{\top} e$.
	The Gaussian distribution with mean $\mu \in \rr^n$ and covariance matrix $\Sigma \in S_n^{++}$ is $\Nn(\mu, \Sigma)$,
	and
	$f_{\mathrm{gauss}}(\cdot, \mu, \Sigma)$ is its density function.
	The uniform probability distribution on the interval $[a, b]$ is denoted by $\Uu(a, b)$.
	The symbol $I_n$ stands for the identity matrix.
	Throughout the paper, we use hat symbols for estimates, e.g., $\hat{y}_k$.

	\section{Problem Statement}\label{section:problem}
	In this work, we consider the class of parametric discrete-time and time-variant linear systems affected by state and output stochastic noise, defined by the following equations, valid for $k=0, \dots, N$
	\begin{align}\label{eq:model}
			x_{k+1} &= A_k(\alpha) x_k + b_k(\alpha) + w_k,  \nonumber\\
			y_{k} &= C_k(\alpha) x_k + v_k, \\
			w_k &\sim \Nn\left( 0, Q_k(\alpha) \right),  \nonumber \\
			v_k &\sim \Nn\left( 0, R_k(\alpha) \right), \nonumber 
	\end{align}
	where $x_k \in \rr^{\nx}$, $y_k \in \rr^{\ny}$ are the states and the measurements while $\alpha \in \rr^{\na}$ stacks the unknown parameters of the dynamical model and of the noise covariance model.
	The functions $A_k(\cdot), b_k(\cdot), C_k(\cdot), Q_k(\cdot)$ and $R_k(\cdot)$ are of appropriate dimensions and are assumed to be known.
	We assume that the random variables $w_0, \dots, w_{N-1}$ and $v_0, \dots, v_{N}$ are drawn independently.
	Additionally, we consider that the initial state comes from the following distribution
	\begin{align}\label{eq:initial-state}
		x_0 &\sim
		\Nn\left(
		\hat{x}_0,  P_0\right),
	\end{align}
	with $\hat{x}_0 \in \rr^n$ and $P_0$ a fixed positive semi-definite matrix.
	Note that this assumption does not lead to any loss of generality, because choosing $A_0(\alpha)$ and $b_0(\alpha)$ is equivalent to choosing the Gaussian distribution of the state $x_1$.
	
	The set of possible parameters $\alpha$ is denoted by $\Xx$ and is assumed to be with the following form
	\begin{align}\label{eq:inequality}
		\Xx \coloneqq \{ \alpha \in \rr^{\na} \; \big| \; h(\alpha) \leq 0 \},
	\end{align}
	where the function $h : \rr^{\na} \rightarrow \rr^{\nh}$ is continuously differentiable.
	This function might express prior knowledge about the parameters.
	For instance, it can specify the ranges in which the parameters can take value.
	It is also necessary to ensure that for any $\alpha \in \Xx$, the matrices $Q_k(\alpha)$ and $R_k(\alpha)$ are PD.
	
	We assume that measurements are available, i.e., we know the sequence $ y_0, \dots, y_N$.
	We denote by $\Yy_k$ the information set up to time $k$ as $\Yy_k \coloneqq \left(y_0, \dots, y_k \right)$.
	The task is to find the parameter $\alpha$ which makes measurements as likely as possible.
	
	\begin{remark}\label{remark:control}
		The equations \eqref{eq:model} notably model the case where the dynamical equations contain inputs $u_k$ which have already been chosen and are assumed to be known.
		Even if the inputs $u_k$ act in a nonlinear way, the estimation problem still falls into the general class described by equations \eqref{eq:model}
	\end{remark}
	
	\begin{remark}\label{remark:ofmpc}
		One important application of this setting is the estimation of a disturbance model which can be used to achieve offset-free MPC~\cite{Pannocchia2003}.
		When such models are used, the process noise $w_k$ now contains two components with a different meaning, which need to be scaled \cite{Kuntz2022}.
		Generally, this problem is difficult, and it also falls into the class of estimation problems described in this paper.
	\end{remark}

	\section{Maximum Likelihood formulation}\label{section:MLE}
	
	In the following, we formulate an optimization problem to estimate $\alpha$ from the data $\Yy_N = (y_0, \dots, y_N)$.
	More precisely, we formulate the Maximum Likelihood Estimation (MLE) problem for identifying $\alpha$ given the probabilistic model \eqref{eq:model}.
	These formulations have been already derived in~\cite{Astrom1979} to estimate model parameters or in~\cite{Abbeel2005} to estimate the matrices $Q$ and $R$.
	Before diving into the MLE problem, we briefly recall the Kalman filter, a central tool for the formulation of the MLE problem.
	
	\subsection{The Kalman filter}\label{subsection:kalman}
	For given parameters $\alpha$ and past measurements $\Yy_{k-1}$, the Kalman filter (KF), introduced in \cite{Kalman1960}, yields a Gaussian probability density of the state $x_k$ given past measurements, which is defined by its mean and its covariance, usually referred to as $\hat{x}_{k\mid k-1}$ and $P_{k\mid k-1}$, but in this paper we will write them $\hat{x}_k$ and $P_k$.
	These are defined with the initial conditions $(\hat{x}_0, P_0)$ and the following recursive equations, valid for $k=0, \dots, N$
	\begin{align}\label{eq:kalman}
		S_k &= C_k P_{k} C_k^{\top} + R_k, 
		\nonumber \\
		e_k &= y_k - C_k \hat{x}_{k}, 
		\nonumber \\
		\hat{x}_{k+1} &= A_k  \left( \hat{x}_{k}   + P_{k} C_k^{\top}S_k^{-1} e_k \right) + b_k, \\
		P_{k+1} &= A_k \left(  P_{k } - P_{k} C_k^{\top} S_k^{-1} C_k P_{k}  \right) A_k^{\top} + Q_k,
		\nonumber
	\end{align}
	where the dependency on $\alpha$ has been omitted for simplicity.
	
	Specifically, the function that maps past data and parameters to the prediction of the next measurement and its covariance is given by
	\begin{align}\label{eq:kalman-pred}
		\begin{split}
			\hat{y}_k(\alpha, \Yy_{k-1}) &\coloneqq C_k \hat{x}_{k}, \\
			S_k(\alpha) &\coloneqq  C_k P_{k} C_k^{\top} + R_k.
		\end{split}
	\end{align}
	Note that $S_k(\alpha) \in S^{++}_{\ny}$ for any $\alpha \in \Xx$.
	Finally, the probability density function of $y_k$ given the probabilistic model \eqref{eq:model} for some $\alpha$, and the measurements $\Yy_{k-1}$ is
	\begin{align}\label{eq:kalman-exactness}
		p \left(y_k \mid \Yy_{k-1}, \alpha\right) = f_{\mathrm{gauss}} ( y_k,  \hat{y}_k(\alpha, \Yy_{k-1}),  S_k(\alpha)).
	\end{align}
	
	\subsection{Maximum Likelihood problem}
	We define the Maximum Likelihood (ML) estimation problem as
	\begin{align}\label{opti:MLE}
		\begin{split}
			&\underset{\alpha \in \Xx}{\mathrm{maximize}} \; p \left(\Yy_N \mid \alpha \right),
		\end{split}
	\end{align}
	where $p( \Yy_N \mid \alpha)$ stands for the value of the probability density function of the measurements $y_0, \dots, y_N$ given the probabilistic model \eqref{eq:model}.
	In previous works, this problem has been derived explicitly \cite{Astrom1979}, we recall this result in the following proposition.
	\begin{prop}\label{prop:MLE}
		The ML formulation \eqref{opti:MLE} is equivalent to the following optimization problem
		\begin{align}
			\label{opti:MLE-explicit}
			\begin{split}
				&\underset{\alpha \in \Xx }{\mathrm{minimize}} \; \sum_{k=0}^{N} \norm{y_k - \hat{y}_k(\alpha, \Yy_{k-1})}^2_{S_k(\alpha)^{-1}} + \log \abs{S_k(\alpha)},
			\end{split}
		\end{align}
		where $\hat{y}_k(\alpha, \Yy_{k-1})$ and $S_k(\alpha)$ are defined in \eqref{eq:kalman-pred}.
	\end{prop}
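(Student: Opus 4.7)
\textbf{Proof plan for Proposition \ref{prop:MLE}.} The plan is to rewrite the joint density $p(\Yy_N \mid \alpha)$ as a product of one-step-ahead conditional densities, use the Kalman filter exactness result \eqref{eq:kalman-exactness} to identify each factor as a known Gaussian density, and finally take the negative logarithm to transform the maximization into the stated minimization.

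First I would apply the chain rule of probability to decompose
\begin{align*}
p(\Yy_N \mid \alpha) = \prod_{k=0}^{N} p(y_k \mid \Yy_{k-1}, \alpha),
\end{align*}
with the convention that for $k=0$ the conditioning set $\Yy_{-1}$ is empty, so that $p(y_0 \mid \Yy_{-1}, \alpha) = p(y_0 \mid \alpha)$, which corresponds to the Kalman prediction initialized with $(\hat x_0, P_0)$. Next, invoking \eqref{eq:kalman-exactness} (which applies to every $k$, including $k=0$, by the same initialization argument) each conditional density is recognized as the Gaussian density $f_{\mathrm{gauss}}(y_k, \hat y_k(\alpha, \Yy_{k-1}), S_k(\alpha))$, where $S_k(\alpha) \in S^{++}_{\ny}$ is guaranteed invertible by the remark following \eqref{eq:kalman-pred}.

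I would then take the logarithm, which is a strictly monotone transformation and hence preserves the argmax over $\Xx$. Using the explicit form of the Gaussian log-density,
\begin{align*}
\log f_{\mathrm{gauss}}(y_k, \hat y_k, S_k) = -\tfrac{1}{2} \norm{y_k - \hat y_k}^2_{S_k^{-1}} - \tfrac{1}{2}\log \abs{S_k} - \tfrac{\ny}{2} \log(2\pi),
\end{align*}
and summing over $k=0,\dots,N$, the constant $(N{+}1)\tfrac{\ny}{2}\log(2\pi)$ can be dropped and the overall factor $-\tfrac{1}{2}$ absorbed by switching from maximization to minimization, yielding exactly \eqref{opti:MLE-explicit}.

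The only step that requires any care is justifying the chain-rule factorization and the applicability of \eqref{eq:kalman-exactness} at the first index. Both follow directly from the linear-Gaussian structure of \eqref{eq:model}--\eqref{eq:initial-state} and from the independence assumptions on the noises $w_k, v_k$, so I expect the argument to be essentially mechanical once these observations are made. The remaining manipulations -- monotonicity of $\log$, discarding additive constants, and the sign flip -- are routine.
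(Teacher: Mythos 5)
Your proposal is correct and follows essentially the same route as the paper's proof: chain-rule factorization of $p(\Yy_N\mid\alpha)$, identification of each factor via the Kalman exactness relation \eqref{eq:kalman-exactness}, and passage to the (doubled) negative log-likelihood with the additive constant $\ny\log(2\pi)$ discarded. Your explicit handling of the $k=0$ convention ($\Yy_{-1}$ empty, prediction initialized at $(\hat{x}_0,P_0)$) is a small clarification the paper leaves implicit, but it does not change the argument.
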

	
	\begin{proof}
		Using basic probability rules, it is easy to derive the following formula
		\begin{align}\label{eq:recursive:likelihood}
			p(\Yy_N \mid \alpha)  = \prod_{k=0}^{N} p \left(y_k \mid \Yy_{k-1}, \alpha \right),
		\end{align}
		where $p \left(y_k \mid \Yy_{k-1}, \alpha \right)$ is defined in the previous section.
		Combining equations \eqref{eq:recursive:likelihood} and \eqref{eq:kalman-exactness},
		the likelihood in \eqref{opti:MLE} can be written explicitly
		\begin{align*}
			p \left(\Yy_N \mid \alpha \right)
			&= \prod_{k=0}^{N}  f_{\mathrm{gauss}} ( y_k,  \hat{y}_k(\alpha, \Yy_{k-1}),  S_k(\alpha)  ), \\
			&= \hspace{-0.1cm} \prod_{k=0}^{N} \hspace{-0.1cm} \left( \abs{2 \pi S_k(\alpha)} \right)^{-\frac{1}{2}} e^{- \frac{1}{2} \norm{y_k - \hat{y}_k(\alpha, \Yy_{k-1})}^2_{S_k(\alpha)^{-1}} }
		\end{align*}
		
		Finally, we apply the decreasing function $p \mapsto - 2\log (p)$, then disregard the additive constant $\ny \log \left( 2 \pi \right)$,
		which leads to the desired form \eqref{opti:MLE-explicit}.
	\end{proof}

	\begin{remark}\label{remark:illed-posedness}
		This ML formulation can be under-determined depending on the choice of the uncertain parameters $\alpha$.
		Indeed, some parameters may be impossible to estimate from the available data
		when the system is over parameterized, or when it is not excited enough.
		In this paper, we simply assume that the parameterization and the measured data are such that there is a unique parameter that maximizes the likelihood in \eqref{opti:MLE}.
		In practice, expert knowledge about the system at hand usually allows one to formulate valid parameterization and design experiments to collect sufficiently information-rich data.
	\end{remark}

	It has been shown, under some additional assumption, that this MLE formulation provides an asymptotically unbiased estimate,
	and that it converges almost surely to the true parameters when the number of data points goes to infinity \cite{Kashyap1970, Ljung1999}.
	Here, we simply state a statistical result that states that if the data is generated through the model  \eqref{eq:model}, the true parameters minimize the expected value of the objective function in  \eqref{opti:MLE-explicit}.
	This result can easily be proven by the fact that the objective function is the negative log-likelihood.
	\begin{prop}
		If $\alpha^\star \in \Xx$ is the true parameter and $\Psi(\cdot, \Yy_N)$ is the objective function in \eqref{opti:MLE-explicit}, then the following holds
		\begin{align}\label{eq:thm-statistical}
			\alpha^\star \in  \underset{\alpha \in \Xx}{\arg \min} \underset{\Yy_N}{\ee}
			\left[
			\Psi(\alpha, \Yy_N) 
			\right].
		\end{align}
	\end{prop}

	\section{Comparison with Trajectory Optimization}\label{section:analysis}
	In this section, we compare the presented formulation with another one, namely, Trajectory Optimization for parameter estimation.
	
	\subsection{Trajectory Optimization}\label{subsection:TO}
	
	The formulation stated so far falls into the class of \textit{prediction error estimation methods} \cite{Ljung2002}.
	Another class of methods widely used for parameter estimation is Trajectory Optimization (TO) \cite{Bock2007b, Kuehl2011}.
	These methods are typically used in Moving Horizon Estimation (MHE) settings for jointly estimating the state and the parameters of a model.
	In this section, we show that these methods are in general suboptimal compared to the one presented and they might fail to estimate some parameters even for an arbitrarily large number of data points $N$.
	
	In TO methods, when the matrices $Q_k$ and $R_k$ are fixed, the parameters are found by solving the following problem
	\begin{align}
		\label{opti:TO}
		&\underset{
			\substack{
				\alpha, x_0, \dots, x_N
			}
		}{\mathrm{minimize}} \;
		\sum_{k=0}^{N-1}\norm{x_{k+1} - A_k(\alpha)x_k - b_k(\alpha)}^2_{Q_k^{-1}} \\
		& \hspace{2cm}+\sum_{k=0}^{N}\norm{C_k(\alpha) x_{k} - y_k}^2_{R_k^{-1}}
		+ \norm{x_{0} - \hat{x}_0}^2_{P_0^{-1}}. \nonumber
	\end{align}
	
	This formulation can also be stated in a likelihood formalism: if $\mathcal{X}_N \coloneqq \left(x_0, \dots, x_N  \right)$ stands for the trajectories, \eqref{opti:TO} is equivalent to solving the following problem
	\begin{align}
		\label{opti:TO-implicit}
		&\underset{\mathcal{X}_N \in \rr^{(N+1) \nx}, \alpha \in \Xx}{\mathrm{maximize}} \;
		p \left( \mathcal{X}_N, \Yy_N \mid \alpha \right) \eqqcolon
		\Phi( \mathcal{X}_N, \alpha)
	\end{align}
	Indeed, the following holds
	\begin{align*}
		p \left( \mathcal{X}_N, \Yy_N \mid \alpha \right)& = p \left( \mathcal{X}_N \mid \alpha\right) \cdot p \left( \Yy_N  \mid \alpha, \mathcal{X}_N\right), \\
		& =  \hspace{-0.1cm} \prod_{k=0}^{N-1} \hspace{-0.1cm} f_{\mathrm{gauss}} \left(x_{k+1}, A_k(\alpha) x_k + b_k(\alpha), Q_k \right)  \\
		& \hspace{1cm} \times \prod_{k=0}^{N} f_{\mathrm{gauss}} \left(y_k, C_k(\alpha) x_k, R_k \right),
	\end{align*}
	which is proportional to the exponential of half the negative objective in \eqref{opti:TO}, when the covariance matrices $Q_k$ and $R_k$ are independent of $\alpha$.
	In addition, using the law of total probability, the likelihood used in \eqref{opti:MLE} can also be written as
	\begin{align}\label{eq:relation}
		p \left( \Yy_N \mid \alpha  \right) \propto \int_{\rr^{(N+1) \nx} } \Phi ( \mathcal{X}_N, \alpha) \mathrm{d}\mathcal{X}_N .
	\end{align}
	This formula shows a new perspective on TO for parameter estimation.
	Indeed, TO could be interpreted as an approximation to MLE which relies on
	\begin{align*}
		&\underset{\alpha \in \Xx}{\mathrm{argmax}}
		\int \Phi ( \mathcal{X}_N, \alpha)  \mathrm{d}\mathcal{X}_N
		\approx
		\underset{\alpha \in \Xx}{\mathrm{argmax}}
		\left(
		\underset{\mathcal{X}_N  }{\mathrm{max}} \; \Phi ( \mathcal{X}_N, \alpha) 
		\right).
	\end{align*}
	In the next part, we highlight the superiority of the exact MLE over TO through an illustrative example.

	\subsection{An illustrative example}\label{subsection:TO2}
	
	While the approximation above can sometimes give decent results, it fails, in general, to give an unbiased estimation of $\alpha$ as we see in the example below.
	\begin{example}\label{example:simple}
		Let us consider the following probabilistic model, where only one parameter $\alpha$ needs to be estimated
		\begin{align}\label{eq:simple_example}
				x_{k+1} &=  x_k + w_k, && k = 0, \dots, N-1, \nonumber\\
				y_{k} &= \alpha x_k + v_k, &&k = 0, \dots, N, \\
				\begin{bmatrix} w_k \\ v_k \end{bmatrix} &\sim
				\Nn\left(
				\begin{bmatrix} 0 \\ 0 \end{bmatrix},
				\begin{bmatrix} 1 & 0 \\ 0 & 1 \end{bmatrix}
				\right), &&k = 0, \dots, N, \nonumber \\
				x_0 &= 0. \nonumber
		\end{align}
		The task is to estimate $\alpha \geq 0$ from measurements $y_0, \dots, y_N$.
	\end{example}
	The TO formulation for the problem~\eqref{eq:simple_example} reads
	\begin{align}
		\label{opti:TO4example}
		\begin{split}
			&\underset{\alpha , x_1, \dots, x_N}{\mathrm{minimize}} \;
			\sum_{k=0}^{N-1}\left( x_{k+1} -x_k \right)^2 +
			\sum_{k=0}^{N}\left( \alpha x_{k} - y_k \right)^2
			\\
			& \mathrm{subject}  \, \mathrm{to} \,
			\hspace{0.5cm}
			\alpha \geq 0.
		\end{split}
	\end{align}
	For any number $N$, and any sequence $y_0, \dots, y_N$,
	the solution of problem \eqref{opti:TO4example} can only be $\alpha = + \infty$.
	Indeed, for $x_k =  \varepsilon y_k$ and $\alpha = 1 / \varepsilon$ with some $\varepsilon > 0$, the objective value of \eqref{opti:TO4example}  is $\varepsilon^2 \sum_{k=0}^{N-1}\left( y_{k+1} - y_k \right)^2$ which is arbitrarily small when $\varepsilon$ is close to zero.
	Hence, the TO method is incapable to estimate $\alpha$ in this example.
	Figure~\ref{fig:RandomWalk_objectives} illustrates the objective functions corresponding to the problem \eqref{opti:TO}, \eqref{opti:MLE} for the Example \ref{example:simple}.
	\begin{figure}
		\vspace{0.3cm}
		\begin{center}
			\scalebox{\pgfscale}{
				\input{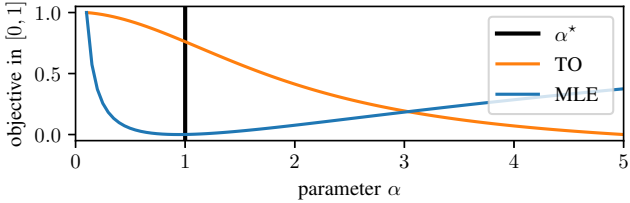}
			}
		\end{center}
		\vspace{-0.5cm}
		\caption{Objective functions for problems~\eqref{opti:TO} and~\eqref{opti:MLE} applied to the Example~\ref{example:simple}.
			For TO, the minimum over $\mathcal{X}_N$ for a given $\alpha$ is shown.
			The data $\Yy_N$ is generated from the probabilistic model~\eqref{eq:simple_example} with $\alpha^\star = 1$ and $N = 1000$.
			Each objective function is transformed affinely such that its values are between $0$ and $1$ on the interval $[0, 5]$.
		}\label{fig:RandomWalk_objectives}
	\end{figure}
	
	In contrast, we can prove and also show experimentally that the MLE formulation~\eqref{opti:MLE} provides an asymptotically unbiased estimate for $\alpha$ in this example.
	For this purpose, we generate measurement time series $\Yy_{N, 1}, \dots, \Yy_{N, m}$ by simulating the system \eqref{eq:simple_example} with different parameters $\alpha^\star_1, \dots, \alpha^\star_{m} \in [ 0, 2 ]$.
	Then, we compute the corresponding estimates $\hat{\alpha}_{i}$ that solve the problems \eqref{opti:MLE}.
	Since only one parameter is sought, it is enough to use a simple line search to compute the corresponding estimate.
	To observe the asymptotic behavior of these estimates when $N$ goes to infinity, let us compute the Mean Squared Error (MSE)
	$E_{\textnormal{MSE}} \coloneqq\frac{1}{m}  \sum_{i=1}^{m} \left(\hat{\alpha}_i - \alpha_i^\star\right)^2$
	and repeat the same experiment for many values of $N$.
	The profile of the MSE as a function of $N$ is depicted in Figure \ref{fig:RandomWalk_convergence}.
	\begin{figure}
		\begin{center}
			\scalebox{\pgfscale}{
				\input{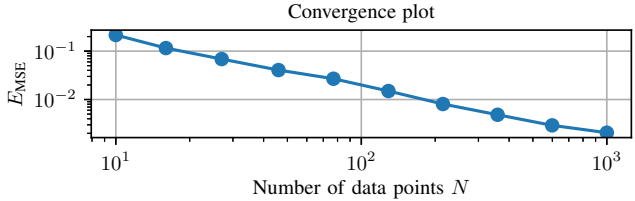}
			}
		\end{center}
		\vspace{-0.5cm}
		\caption{Mean Squared Error over $m = 200$ samples of the estimates against the length of the measurement time series $N$, for Example \ref{example:simple}.
		}\label{fig:RandomWalk_convergence}
	\end{figure}
	From this experiment, we can observe that the MLE formulation provides good estimates and, as expected, the performance increases with the number of measurements $N$.

	\section{Tailored optimization algorithm}\label{section:optimization}
	Due to the nonlinearity of the functions $\hat{y}_k(\alpha, \Yy_{k-1})$ and $S_k(\alpha)$, the optimization problem \eqref{opti:MLE-explicit} is a nonconvex and Nonlinear Programming problem (NLP).
	Hence, solving this problem to global optimality is very hard.
	In fact, the computational difficulty of this optimization problem, even to local optimality, has been the main obstacle to the use of MLE methods to estimate parameters in the noise covariances of linear systems.
	In the following, we discuss two NLP algorithms for solving efficiently the MLE problem~\eqref{opti:MLE-explicit}.
	Even though such algorithms converge to a local minimum that is not necessarily the global minimum, we assume that this already provides a correct estimate.
	In the first part, we present how to use a sparse interior point solver for this problem, and in the second we present
	a hand-tailored Sequential Quadratic Programming (SQP) specific to the optimization problem concerned.
	
	The efficiency of these algorithms will be assessed in Section \ref{section:numerical} on a realistic numerical example.
	
	\subsection{Optimization using a sparse interior-point solver}
	We formulate the MLE optimization problem using CasADi \cite{Andersson2019} via its Python interface and solve the corresponding NLP using IPOPT \cite{Waechter2006} with the shipped sparse linear solver MUMPS.
	We promote sparsity in the optimization problem by adopting a multiple shooting formulation.
	Therefore, we lift the variables involved in the Kalman filter propagation and we impose equations \eqref{eq:kalman} as constraints.
	The optimization problem \eqref{opti:MLE} takes the following form
	\begin{align}\label{opti:MLE-lifted}
		&\underset{ \substack{
				\alpha, \bm{e}, \bm{S},
				\bm{\hat{x}}, \bm{P}
			}
		}{\mathrm{minimize}} \; \sum_{k=0}^{N} 
		(e_k)^{\top} (S_k)^{-1} e_k + \log \abs{S_k}
		\nonumber \\
		& \mathrm{subject}  \, \mathrm{to} \, \nonumber
		\\&
		S_k = C_k \, P_{k} \, C_k^{\top} + R_k(\alpha), \hspace{1cm} \mathrm{for} \; k=0, \dots, N, \nonumber
		\\&
		e_k = y_k - C_k \hat{x}_k,
		\\&
		\hat{x}_{k+1} = A_k(\alpha)\left( \hat{x}_{k} + P_{k} \, C_k^{\top} S_k^{-1} e_k \right) + b_k(\alpha), \nonumber
		\\&
		P_{k+1} = A_k(\alpha) \left(  P_k - P_k \, C_k^{\top} S^{-1} \, C_k \, P_k  \right) A_k(\alpha)^{\top} + Q_k(\alpha), \nonumber
		\\&
		h(\alpha) \leq 0, \nonumber
	\end{align}
	
	\subsection{A tailored Sequential Quadratic Programming algorithm}
	Before describing the tailored algorithm, we need to reformulate the optimization problem \eqref{opti:MLE-explicit}.
	Thus, we define the functions $e_k(\alpha)$ and $S_k(\alpha)$ that map the parameters $\alpha$ to the solution $e_k$ and $S_k$ of the recursive equations of the Kalman filter defined in \eqref{eq:kalman}.
	Secondly, we define a function $\varphi : \rr^{\ny} \times S^{++}_{\ny} \times \rr \to \rr$,  with $\varphi( e, S, \gamma ) \coloneqq e^{\top} S^{-1} e + \gamma$.
	With these definitions, problem \eqref{opti:MLE-explicit} can be reformulated as follows
	\begin{align}
		\label{opti:MLE4opti}
		\begin{split}
			&\underset{\alpha \in \rr^{\na}}{\mathrm{minimize}} \; \sum_{k=0}^{N} 
			\varphi \big(e_k(\alpha) , S_k(\alpha),  \log \abs{S_k(\alpha)} \big)
			\\
			& \mathrm{subject}  \, \mathrm{to} \,
			\hspace{0.5cm}
			h(\alpha) \leq 0.
		\end{split}
	\end{align}
	An important point is that the function $\varphi(\cdot, \cdot, \cdot)$ is convex \cite{Boyd2004}, hence the objective function has a ``convex-over-nonlinear'' structure, which allows the use of an optimization technique called the Generalized Gauss-Newton (GGN) method \cite{Schraudolph2002, Messerer2021a}.
	Finally, for compactness and consistency with the notation adopted in \cite{Messerer2021a}, we can rewrite the optimization problem \eqref{opti:MLE4opti} as
	\begin{align}
		\label{opti:MLE4opti compact notation}
		\begin{split}
			&\underset{\alpha \in \rr^{\na}}{\mathrm{minimize}} \; \sum_{k=0}^{N} 
			\phi \big(F_k(\alpha) \big)
			\\
			& \mathrm{subject}  \, \mathrm{to} \,
			\hspace{0.5cm}
			h(\alpha) \leq 0,
		\end{split}
	\end{align}
	where $F_k$ are nonlinear functions given by stacking the components of the functions
	$e_k(\alpha)$, $S_k(\alpha)$, and $\log \abs{S_k(\alpha)}$,
	and $\phi$ is the vector-input version of $\varphi$.
	
	The GGN method that we develop consists in sequentially solving a Quadratic Program (QP) obtained by the quadratic approximation of \eqref{opti:MLE4opti compact notation} around the current solution point $\bar{\alpha}$.
	Specifically, we linearize  the inequality constraints and the functions $F_k(\alpha)$, while we replace $\phi(\cdot)$ by its quadratic approximation $\phi_{\mathrm{quad}}(\cdot)$ defined as follows
	\begin{align*}
		\phi_{\mathrm{quad}}(\Delta F; \bar{F}) \coloneqq \phi(\bar{F} ) + \dv{\phi}{F} \Delta F + \frac{1}{2}(\Delta F )^\top \dvdouble{\phi}{F} \Delta F,
	\end{align*}
	which is ensured to be convex.
	As a result, the QP to solve at each iteration reads
	\vspace{-0.1cm}
	\begin{align}
		\label{opti:QP}
		\begin{split}
			&\underset{\Delta \alpha \in \rr^{\na}}{\mathrm{minimize}} \; \sum_{k=0}^{N} 
			\phi_{\mathrm{quad}} \left(  \dv{F_k}{\alpha}(\bar{\alpha}) \Delta \alpha  ; F_k(\bar{\alpha})  \right)
			\\
			& \mathrm{subject}  \, \mathrm{to} \,
			\hspace{0.5cm}
			h(\bar{\alpha}) + \dv{h}{\alpha}(\bar{\alpha})  \Delta \alpha \leq 0.
		\end{split}
	\end{align}
	Finally, to ensure convergence, the optimization variable $\alpha$ is ultimately updated in the direction found by the QP, using a globalization technique based on back-tracking line-search until the Armijo condition is satisfied~\cite{Nocedal2000}.
	
	The linearization of the functions $F_k(\cdot)$ is done by propagating the values and derivatives of $S_k$, $e_k$, $\hat{x}_{k\mid k-1}$ and $P_{k \mid k-1} $ in equations \eqref{eq:kalman}.
	We also use the mathematical formula $\dv{ \log \abs{S}}{S} = S^{-1}$ for the linearization of $\log \abs{S_k(\alpha)}$.
	Note that the hand-tailored implementation of these derivatives improves efficiency.
	For instance, the inverse matrices $S_k^{-1}$ are computed only once, while they are used multiple times: in the first equation of \eqref{eq:kalman} or in the derivative of $\log \abs{S_k}$.
	
	Finally, regarding the stopping criterion, the algorithm stops when the cost decreases less than a given relative tolerance, which we set to $10^{-5}$.
	The presented algorithm is implemented in Python using standard libraries for linear algebra, and CVXOPT \cite{Andersen2013} for solving the QP.
	
	\begin{remark}\label{remark:complexity}
		The significant steps are the propagation of the derivatives of $P_{k}$ and the solution of the QP.
		Hence, the complexity of an SQP step is $\mathcal{O}(N \nx^3 \na + \na^3)$.
	\end{remark}
	\begin{remark}\label{remark:mhe}
		Even though the method scales linearly in the horizon length $N$,
		for online parameter estimation, where the optimization needs to be performed quickly and the quantity of past data is growing, moving horizons might be considered, as proposed in \cite{Valluru2017}.
	\end{remark}
	
	\section{Numerical example}\label{section:numerical}
	
	In this section, we apply the presented method to a realistic estimation task.
	We use this task to investigate the performance of the presented method when the dimensions of the system scale up.
	It is also used to assess the efficiency of the optimization algorithms discussed in Section \ref{section:optimization}.
	
	The present estimation task is inspired by an industrial problem of controlling the temperature of a fluid through mass transport inside a straight pipe.
	The control inputs are the temperature of the inlet and the position of a valve located in the inlet, which can modify the fluid velocity.
	The system is also subjected to unknown disturbances and heat losses.
	The output measurements are obtained by two thermometers placed at two different locations of the pipe.
	In the context of controlling this system via MPC with a linear state estimator,
	an accurate knowledge of its parameters is required.
	Thus, our task is to estimate model parameters, such as the heat losses, or the heat transfer coefficients that depend on the valve position.
	The precision of each sensor is also a parameter to estimate, jointly with the process noise and the disturbance fluctuations.
	For this thermal system we propose a linear model given by the following equations, for $k=0, \dots, N$
	\begin{align}\label{eq: heat transfer example}
		&x_{1, k+1} = (1 - a_k) x_{1, k} + a_k \alpha_1 u_{1,k}  + w^x_{1,k}, \nonumber \\
		&x_{i, k+1} = (1 - a_k) x_{i, k} + a_k x_{i-1, k}  + w^x_{i,k}, \hspace{0.2cm} i = 2, \dots, 5, \nonumber \\
		&d_{k+1} = d_{ k} +  w^d_{k}, \nonumber \\
		&y_{1, k} = x_{2, k+1} + d_{ k} + v_{1,k}, \nonumber \\
		&y_{2, k} = x_{5, k+1} + d_{k} + v_{2,k}, \\
		&w^x_{k} \sim \mathcal{N}(0, \mathrm{diag}(\alpha_4, \varepsilon,  \varepsilon, \varepsilon, \varepsilon) ), \nonumber \\
		&w^d_{k} \sim \mathcal{N}(0, \alpha_5), \nonumber \\
		&v_{k} \sim \mathcal{N}(0, \mathrm{diag}(\alpha_6, \alpha_7)), \nonumber
	\end{align}
	where $a_k = \frac{1}{10}( \alpha_2  +  \alpha_3 u_{2, k})$ and $\varepsilon = 10^{-6}$.
	The state $x \in \mathbb{R}^{5}$ models the temperature of the fluid at different locations along the pipe.
	The state has been augmented by $d \in \mathbb{R}$ to account for disturbances (cf. Remark \ref{remark:ofmpc}).
	The control is given by $u \in \mathbb{R}^2$, where $u_1$ denotes the inlet temperature and $u_2$ the valve position.
	Note that the control acts both linearly and non-linearly on the system, which makes the present system time-variant.
	The measured temperatures at locations $2$ and $5$ corresponds to the output $y \in \mathbb{R}^2$.
	The system has parameters in both the dynamics and the noise covariances, the parameter vector is $\alpha \in \Xx = [0,1]^{7}$.
	The parameter $\alpha_1$ models heat losses, while $\alpha_2$ and $\alpha_3$ model the heat transfer dues to mass transport for the two positions of the valve.
	The task is to estimate the whole parameter vector $\alpha$.
	
	We first collect measurements by simulating the system using equations \eqref{eq: heat transfer example},
	where the inputs alternate every $200$ time-steps between two values $u^{\mathrm{low}} = (100, 0)$ and $u^{\mathrm{high}} = (200, 1)$.
	Figure \ref{fig:example} shows a time series generated via this model.
	Each parameter is sampled from a uniform probability distribution $\Uu \left(0, 1 \right)$.
	\begin{figure}
		\begin{center}
			\vspace{0.2cm}
			\scalebox{\pgfscale}{
				\input{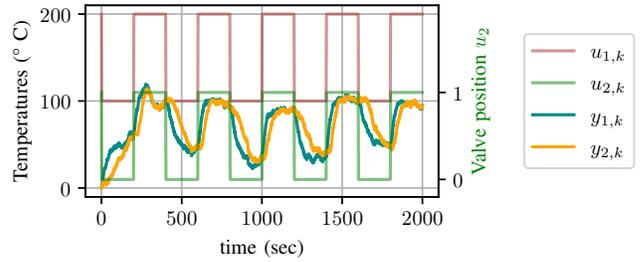}
			}
		\end{center}
		\vspace{-0.5cm}
		\caption{Example of input and output data generated through the described process, for the parameters $\alpha^\star = (\frac{1}{2}, \dots, \frac{1}{2}) \in [0, 1]^7$.
		}
		\label{fig:example}
	\end{figure}
	We apply the presented MLE method with different sizes of measurement data, from $N=1000$ to $N=3000$.
	The two optimization algorithms described in Section \ref{section:optimization} are used separately.
	The first observation is that both algorithms, i.e., the one based on the solver IPOPT and the one based on the tailored SQP method, converge to the same point, with a maximum difference between the two solutions smaller than $10^{-3}$.
	This is encouraging because it seems to imply that both algorithms converged to the optimum of the MLE optimization problem.
	
	Runtimes of the two algorithm are compared in Figure \ref{fig:rtime}.
	\begin{figure}
		\begin{center}
			\scalebox{\pgfscale}{
				\input{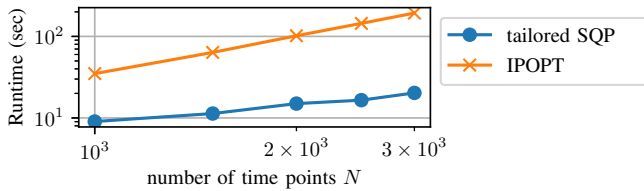}
			}
		\end{center}
		\vspace{-0.5cm}
		\caption{Comparison of runtime for the algorithm using the tailored SQP and the one using IPOPT when the number of data points grows.
		}
		\label{fig:rtime}
	\end{figure}
	This figure confirms that the algorithm complexity scales linearly in the number of data point $N$, as it was mentioned in Remark \ref{remark:complexity}.
	Moreover, it shows that the developed SQP method has a runtime $5$ times smaller than IPOPT.
	Even though, our implementation is done in Python using standard libraries while IPOPT runs compiled C code.
	Hence, we expect that by implementing the proposed SQP method in a compiled language we could reduce its runtime dramatically.
	As a reference, for the investigated problem with nontrivial dimensions and with a rather difficult estimation task, the algorithm takes about $20$ seconds for $N=3000$ data points.
	
	Regarding the estimation performance, in Figure \ref{fig:error}, we compare the sum of squares of the differences between the estimated parameters and the true parameters.
	\begin{figure}
		\begin{center}
			\scalebox{\pgfscale}{
				\input{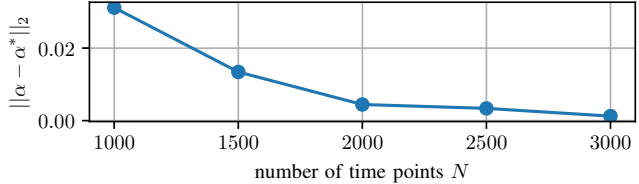}
			}
		\end{center}
		\vspace{-0.5cm}
		\caption{Difference between the estimated parameters $\alpha$, and true parameters $\alpha^\star$ when the amount of data grows.
		}
		\label{fig:error}
	\end{figure}
	The plot shows that both model parameters and noise variances are correctly estimated, and in case of enough data points, the true parameters are recovered.

	\section{Conclusion and Outlook}
	This paper offers a study about parameter estimation for linear dynamical systems in the maximum likelihood framework.
	We have shown, from a theoretical and a numerical perspective, that through this framework it is possible to jointly estimate parameters in the system dynamics and the noise covariances.
	Specifically, we presented a tailored optimization algorithm that extends the application of the maximum likelihood framework to systems with realistic dimensions.
	A fast open-source implementation of the algorithm is left for future research, as well as the case of online estimation.
	\bibliographystyle{IEEEtran}
	\bibliography{bib}

\begin{thebibliography}{10}
\providecommand{\url}[1]{#1}
\csname url@samestyle\endcsname
\providecommand{\newblock}{\relax}
\providecommand{\bibinfo}[2]{#2}
\providecommand{\BIBentrySTDinterwordspacing}{\spaceskip=0pt\relax}
\providecommand{\BIBentryALTinterwordstretchfactor}{4}
\providecommand{\BIBentryALTinterwordspacing}{\spaceskip=\fontdimen2\font plus
\BIBentryALTinterwordstretchfactor\fontdimen3\font minus
  \fontdimen4\font\relax}
\providecommand{\BIBforeignlanguage}[2]{{%
\expandafter\ifx\csname l@#1\endcsname\relax
\typeout{** WARNING: IEEEtran.bst: No hyphenation pattern has been}%
\typeout{** loaded for the language `#1'. Using the pattern for}%
\typeout{** the default language instead.}%
\else
\language=\csname l@#1\endcsname
\fi
#2}}
\providecommand{\BIBdecl}{\relax}
\BIBdecl

\bibitem{Rawlings2017}
J.~B. Rawlings, D.~Q. Mayne, and M.~M. Diehl, \emph{Model Predictive Control:
  Theory, Computation, and Design}, 2nd~ed.\hskip 1em plus 0.5em minus
  0.4em\relax {N}ob {H}ill, 2017.

\bibitem{Verhaegen1994}
M.~Verhaegen, ``Identification of the deterministic part of {MIMO} state space
  models given in innovations form from input-output data,''
  \emph{{A}utomatica}, vol.~30, no.~1, pp. 61--74, 1994.

\bibitem{Van1994}
P.~Van~Overschee and B.~De~Moor, ``{N4SID}: Subspace algorithms for the
  identification of combined deterministic-stochastic systems,''
  \emph{{A}utomatica}, vol.~30, no.~1, pp. 75--93, 1994.

\bibitem{Ljung1999}
L.~Ljung, \emph{{S}ystem identification: Theory for the User}.\hskip 1em plus
  0.5em minus 0.4em\relax Upper Saddle River, N.J.: {P}rentice {H}all, 1999.

\bibitem{Kalman1960}
R.~Kalman, ``{A} {N}ew {A}pproach to {L}inear {F}iltering and {P}rediction
  {P}roblems,'' \emph{Transactions of the {ASME}--Journal of Basic
  Engineering}, vol.~82, pp. 35--45, 1960.

\bibitem{Myers1976}
K.~Myers and B.~Tapley, ``Adaptive sequential estimation with unknown noise
  statistics,'' \emph{{IEEE} Transactions on Automatic Control}, vol.~21,
  no.~4, pp. 520--523, 1976.

\bibitem{Mehra1970}
R.~Mehra, ``On the identification of variances and adaptive {K}alman
  filtering,'' \emph{IEEE Transactions on {A}utomatic {C}ontrol}, vol.~15,
  no.~2, pp. 175--184, 1970.

\bibitem{Kashyap1970}
R.~Kashyap, ``Maximum likelihood identification of stochastic linear systems,''
  \emph{IEEE Transactions on {A}utomatic {C}ontrol}, vol.~15, no.~1, pp.
  25--34, 1970.

\bibitem{Astrom1979}
K.~Astrom, ``Maximum likelihood and prediction error methods,'' \emph{IFAC
  Proceedings Volumes}, vol.~12, no.~8, pp. 551--574, 1979.

\bibitem{Valluru2017}
J.~Valluru, P.~Lakhmani, S.~C. Patwardhan, and L.~T. Biegler, ``Development of
  moving window state and parameter estimators under maximum likelihood and
  {B}ayesian frameworks,'' \emph{Journal of Process Control}, vol.~60, pp.
  48--67, 2017.

\bibitem{Bock2007b}
H.~Bock, E.~Kostina, and J.~Schl\"oder, ``Numerical methods for parameter
  estimation in nonlinear differential algebraic equations,'' \emph{GAMM
  Mitteilungen}, vol. 30/2, pp. 376--408, 2007.

\bibitem{Kuehl2011}
P.~K{\"u}hl, M.~Diehl, T.~Kraus, J.~P. Schl{\"o}der, and H.~G. Bock, ``A
  real-time algorithm for moving horizon state and parameter estimation,''
  \emph{Computers and Chemical Engineering}, vol.~35, no.~1, pp. 71--83, 2011.

\bibitem{Pannocchia2003}
G.~Pannocchia and J.~Rawlings, ``{D}isturbance {M}odels for {O}ffset-{F}ree
  {M}odel-{P}redictive {C}ontrol,'' \emph{AIChE Journal}, vol.~49, pp.
  426--437, 2003.

\bibitem{Kuntz2022}
S.~J. Kuntz and J.~B. Rawlings, ``Maximum likelihood estimation of linear
  disturbance models for offset-free model predictive control,'' \emph{2022
  American Control Conference (ACC)}, pp. 3961--3966, 2022.

\bibitem{Abbeel2005}
P.~Abbeel, A.~Coates, M.~Montemerlo, A.~Y. Ng, and S.~Thrun, ``Discriminative
  training of {K}alman filters.'' in \emph{Robotics: Science and systems},
  vol.~2, 2005, p.~1.

\bibitem{Ljung2002}
L.~Ljung, ``Prediction error estimation methods,'' \emph{Circuits, Systems and
  Signal Processing}, vol.~21, no.~1, pp. 11--21, 2002.

\bibitem{Andersson2019}
J.~A.~E. Andersson, J.~Gillis, G.~Horn, J.~B. Rawlings, and M.~Diehl,
  ``{CasADi} -- a software framework for nonlinear optimization and optimal
  control,'' \emph{Mathematical Programming Computation}, vol.~11, no.~1, pp.
  1--36, 2019.

\bibitem{Waechter2006}
A.~W\"achter and L.~T. Biegler, ``On the implementation of an interior-point
  filter line-search algorithm for large-scale nonlinear programming,''
  \emph{Mathematical Programming}, vol. 106, no.~1, pp. 25--57, 2006.

\bibitem{Boyd2004}
S.~Boyd and L.~Vandenberghe, \emph{Convex Optimization}.\hskip 1em plus 0.5em
  minus 0.4em\relax Cambridge: University {P}ress, 2004.

\bibitem{Schraudolph2002}
N.~N. Schraudolph, ``Fast curvature matrix-vector products for second-order
  gradient descent,'' \emph{Neural Computation}, vol.~14, no.~7, pp.
  1723--1738, 2002.

\bibitem{Messerer2021a}
F.~Messerer, K.~Baumg{\"a}rtner, and M.~Diehl, ``Survey of sequential convex
  programming and generalized {G}auss-{N}ewton methods,'' \emph{ESAIM:
  Proceedings and Surveys}, vol.~71, pp. 64--88, 2021.

\bibitem{Nocedal2000}
J.~Nocedal and S.~Wright, \emph{Numerical Optimization}.\hskip 1em plus 0.5em
  minus 0.4em\relax Springer-Verlag, 2000.

\bibitem{Andersen2013}
M.~S. Andersen, J.~Dahl, and L.~Vandenberghe, ``Cvxopt: A python package for
  convex optimization, version 1.1.6,'' Available at \url{cvxopt.org}, 2013.

\end{thebibliography}

\end{document}